\theoremstyle{plain}
\newtheorem{corollary}{Corollary}
\newtheorem{definition}{Definition}
\newtheorem{lemma}{Lemma}
\newtheorem{proposition}{Proposition}
\newtheorem{remark}{Remark}
\newtheorem{theorem}{Theorem}
\numberwithin{equation}{section}
\begin{document}
\title[$L^{2}$-singular dichotomy ]{An abstract proof of the $L^{2}$%
-singular dichotomy for orbital measures on Lie algebras and groups}
\author{Kathryn E. Hare}
\address{Dept. of Pure Mathematics\\
University of Waterloo\\
Waterloo, Ont., Canada N2L 3G1}
\email{kehare@uwaterloo.ca}
\thanks{This research is supported in part by NSERC\ 45597}
\author{Jimmy He}
\address{Dept. of Mathematics\\
Stanford University\\
Stanford, CA., USA, 94305-2125}
\email{jimmyhe@stanford.edu}
\subjclass[2000]{Primary 43A80; Secondary 22E30, 53D05 }
\keywords{orbital measure, Duistermaat-Heckman theorem}
\thanks{This paper is in final form and no version of it will be submitted
for publication elsewhere.}

\begin{abstract}
Let $G$ be a compact, connected simple Lie group and $\mathfrak{g}$ its Lie
algebra. It is known that if $\mu $ is any $G$-invariant measure supported
on an adjoint orbit in $\mathfrak{g}$, then for each integer $k$, the $k$%
-fold convolution product of $\mu $ with itself is either singular or in $%
L^{2}$. This was originally proven by computations that depended on the Lie
type of $\mathfrak{g}$, as well as properties of the measure. In this note,
we observe that the validity of this dichotomy is a direct consequence of
the Duistermaat-Heckman theorem from symplectic geometry and that, in fact,
any convolution product of (even distinct) orbital measures is either
singular or in $L^{2+\varepsilon }$ for some $\varepsilon >0$. An abstract
transference result is given to show that the $L^{2}$-singular
dichotomy holds for certain of the $G$-invariant measures supported on
conjugacy classes in $G.$
\end{abstract}

\maketitle

\section{Introduction}

Let $G$ be a compact, connected simple Lie group and $\mathfrak{g}$ its Lie
algebra. A classical result due to Ragozin \cite{Ra} states that the
convolution product of dimension of $\mathfrak{g}$, non-trivial orbital
measures (meaning, the $G$-invariant probability measures supported on the
adjoint orbits in $\mathfrak{g)}$, is absolutely continuous with respect to
the Lebesgue measure on $\mathfrak{g}$. Equivalently, the convolution
product measure has a Radon-Nikodym derivative in $L^{1}$. In a series of
papers, one of the authors, with various co-authors, significantly improved
upon this result, ultimately determining for each orbital measure the
minimum integer $k$ such that its $k$-fold convolution product is absolutely
continuous. Furthermore, it was shown that a dichotomy holds: either the $k$%
-fold product is purely singular to Lebesgue measure or its Radon-Nikodym
derivative is in $L^{1}\cap L^{2}(\mathfrak{g)}$.  The proof involved
detailed case-by-case analysis for each Lie type and orbital measure, see 
\cite{MathZ} and \cite{HJY}. This dichotomy was also shown to hold for a $k$%
-fold product of any invariant probability measure supported on a conjugacy
class in the Lie group (\cite{Adv}), and was subsequently extended to
products of distinct orbital measures in the Lie algebra $su(n)$ in \cite{Wr}%
.

Previously, Ricci and Stein (\cite{RS}, \cite{RS1}) had given an abstract
argument to show that if the $k$-fold convolution product has Radon-Nikodym
derivative in $L^{1}$, then it also belongs to $L^{1+\varepsilon }$ for some 
$\varepsilon >0$. But there was no suggestion in their proof that $%
\varepsilon $ could be as large as $1$. In this note, we see that the fact
that a convolution product of (possibly distinct) orbital measures on $%
\mathfrak{g}$ is absolutely continuous if and only if it is in $L^{2}(%
\mathfrak{g)}$ can be deduced abstractly from the Duistermaat-Heckman
theorem from symplectic geometry. Moreover, if such a product is absolutely
continuous, then it is actually in $L^{2+\varepsilon }(\mathfrak{g})$ for
every $\varepsilon <2$rank$\mathfrak{g}/(\dim \mathfrak{g}-$rank$\mathfrak{g}%
).$

We also show how the Dooley-Wildeberger wrapping map \cite{DW} can be used
to transfer the dichotomy result to the class of invariant measures
supported on conjugacy classes in $G$ that are the image under the
exponential map of adjoint orbits of the same dimension. Other than for the
Lie groups $G=$ $SU(n)$ (where all orbital measures on the group have this
additional property), it remains open if all products of any (distinct)
orbital measures on the Lie groups satisfy the $L^{2}$-singular dichotomy.

Acknowledgement: We are grateful to A. Wright for providing us with remarks
from M. Vergne pointing out the connection with the Duistermaat-Heckman
theorem and thank S. Gupta for helpful conversations.

\section{Duistermaat-Heckman Theorem and the Dichotomy for Orbital Measures
on the Lie Algebra}

Let $G$ be a compact, connected simple Lie group with maximal torus $T$ and
let $\mathfrak{g}$ and $\mathfrak{t}$ be the corresponding Lie algebras.
Assume $X\in \mathfrak{g}$. By the adjoint orbit $O_{X}\subseteq \mathfrak{g}
$ we mean the orbit generated by $X$ under the adjoint action of the
associated compact Lie group $G$,%
\begin{equation*}
O_{X}=\{Ad(g)X:g\in G\}\text{.}
\end{equation*}%
As observed in \cite{DRW}, any adjoint orbit $O$ is a symplectic manifold,
under the identification with the co-adjoint orbit, with the 2-form given by 
$\Omega _{Z}([X,Z],[Y,Z])=(Z,[X,Y])$ for $Z\in O$ and $X,Y\in \mathfrak{g}$.
Here $[\cdot ,\cdot ]$ denotes the Lie bracket. This 2-form is invariant
under the $G$-action and with the moment map $\phi :O\rightarrow \mathfrak{g}
$ given by inclusion, $(O,\Omega ,\phi )$ is a Hamiltonian $G$-space.

The product of orbits, $O_{X}\times O_{Y},$ is also a Hamiltonian $G$-space
with $2$-form equal to the sum of the 2-forms on $O_{X}$ and $O_{Y}$ and
moment map $\phi (Z_{1},Z_{2})=Z_{1}+Z_{2}$ for $Z_{1}\in O_{X}$ and $%
Z_{2}\in O_{Y}$.

\begin{definition}
By the orbital measure $\mu _{X}$ on $\mathfrak{g}$ we mean the unique $G$%
-invariant probability measure, supported on $O_{X}$, given by%
\begin{equation*}
\int_{\mathfrak{g}}fd\mu _{X}=\int_{G}f(Ad(g)X)dg\text{ }
\end{equation*}%
for all continuous, compactly supported functions $f$ on $\mathfrak{g}$.
Here $dg$ is the Haar measure on $G$.
\end{definition}

When viewed on $O_{X}$, $\mu _{X}$ is the Liouville measure on $O_{X}$ (up
to normalization). Viewed on $\mathfrak{g,}$ the orbital measure is the
pushforward measure under the moment map and hence the Duistermaat-Heckman
measure. The product measure $\mu _{X}\times \mu _{Y}$ is the Liouville
measure on $O_{X}\times O_{Y}$ and the pushforward under the moment map is
the convolution of the orbital measures, $\mu _{X}\ast \mu _{Y}$.

The well known Duistermaat-Heckman theorem states the following.

\begin{theorem}
(c.f. \cite{DH}, \cite{ET}) Suppose $N$ is a compact Hamiltonian $T$-space
with $T$ a torus and proper moment map $\phi $. The pushforward of the
Liouville measure on $N$ is a polynomial on each connected component of the
regular values of $\phi ,$ of degree at most $\dim N/2-\dim T.$
\end{theorem}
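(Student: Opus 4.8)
The plan is to prove this via symplectic reduction together with the Duistermaat-Heckman variation formula. Fix a connected component $U$ of the set of regular values of $\phi$. Since $\phi$ is proper, the restriction $\phi\colon\phi^{-1}(U)\to U$ is a proper submersion, hence, by Ehresmann's theorem, a locally trivial fibre bundle; shrinking $U$ if necessary we may assume this bundle is trivial, $\phi^{-1}(U)\cong P\times U$ with $P=\phi^{-1}(a_{0})$ for a basepoint $a_{0}\in U$. Because $a_{0}$ is a regular value, $T$ acts locally freely on $P$, so the quotient $M_{a_{0}}=P/T$ is a symplectic orbifold (a manifold when the action is free) of dimension $\dim N-2\dim T$, carrying the reduced form $\omega_{a_{0}}$. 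The first step is to identify the Radon-Nikodym derivative of $\phi_{\ast}(\text{Liouville})$ at a regular value $a$ with the symplectic volume $\int_{M_{a}}\omega_{a}^{m}/m!$ of the reduced space, where $m=\tfrac{1}{2}\dim N-\dim T$; this is a Fubini (coarea) computation, using that along a $T$-invariant connection the Liouville form on $\phi^{-1}(U)$ factors as the reduced form in the fibre directions wedged with the Euclidean volume form on $U$.

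The heart of the argument is the variation formula. Choosing a $T$-invariant connection one-form $\theta\in\Omega^{1}(P)\otimes\mathfrak{t}$ on $P\to M_{a_{0}}$ with curvature $c\in\Omega^{2}(M_{a_{0}})\otimes\mathfrak{t}^{\ast}$, one shows that under the trivialisation the reduced symplectic class on $M_{a}\cong M_{a_{0}}$ is
\begin{equation*}
[\omega_{a}]=[\omega_{a_{0}}]+\langle a-a_{0},[c]\rangle \in H^{2}(M_{a_{0}};\mathbb{R}),
\end{equation*}
an affine function of $a$: one pulls $\omega$ back to $P\times U$, subtracts $d\langle a-a_{0},\theta\rangle$ to obtain a closed form whose restriction to each slice $P\times\{a\}$ is basic and represents $\omega_{a}$, and reads off its class. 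Consequently the density
\begin{equation*}
f(a)=\int_{M_{a}}\frac{\omega_{a}^{m}}{m!}=\int_{M_{a_{0}}}\frac{\bigl([\omega_{a_{0}}]+\langle a-a_{0},[c]\rangle\bigr)^{m}}{m!}
\end{equation*}
is a polynomial in $a\in U$ of degree at most $m=\dim N/2-\dim T$, which is exactly the assertion.

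I expect the main obstacle to be making the first step rigorous --- the coarea identification of the pushforward density with the reduced volume --- together with the bookkeeping required when $T$ acts only locally freely, so that $M_{a}$ is an orbifold: one must verify that integration of forms over symplectic orbifolds behaves as in the smooth case, that Ehresmann's theorem and the connection construction can be carried out $T$-equivariantly, and that the resulting polynomial does not depend on the auxiliary choices of $a_{0}$ and $\theta$. An alternative, reduction-free route is to compute the Fourier transform of $\phi_{\ast}(\text{Liouville})$ by the Atiyah-Bott-Berline-Vergne abelian localisation formula: it becomes a finite sum over the fixed-point components $F$ of terms $e^{i\langle\phi(F),\xi\rangle}$ times rational functions of $\xi$ assembled from the weights of the $T$-action on the normal bundle of $F$, and inverting this Fourier transform displays $\phi_{\ast}(\text{Liouville})$ as a piecewise polynomial measure of the stated degree. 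Since this requires setting up equivariant cohomology and localisation, I would give the reduction argument as the main proof and only mention localisation as a remark.
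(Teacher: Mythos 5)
This statement is quoted in the paper as a known result (with references to Duistermaat--Heckman and Eliashberg--Traynor) and is not proved there, so the only meaningful comparison is with the cited original argument --- and your proposal is essentially that argument: identify the pushforward density at a regular value with the symplectic volume of the reduced space $\phi^{-1}(a)/T$, and use the variation formula $[\omega_{a}]=[\omega_{a_{0}}]+\langle a-a_{0},[c]\rangle$ to see that this volume is a polynomial of degree at most $\tfrac{1}{2}\dim N-\dim T$. Your outline is correct, and the caveats you flag (orbifold reduced spaces for locally free actions, the equivariant Ehresmann trivialisation, normalisation constants in the coarea step, and patching the local polynomials over the connected component) are exactly the points that need care in a full write-up.
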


We will fix a positive Weyl chamber and let $\mathfrak{t}^{0}$ denote the
interior of this set. Let $\Phi ^{+}$ be the set of positive roots and
denote by $\pi $ the $G$-invariant function given by%
\begin{equation*}
\pi (H)=\prod_{\alpha \in \Phi ^{+}}\alpha (H)\text{ for }H\in \mathfrak{t.}
\end{equation*}

The following change of variables formula is well known (in more generality).

\begin{proposition}
(c.f. \cite{DHV}) Let $M=O_{X_{1}}\times \cdot \cdot \cdot \times O_{X_{L}},$
where $X_{i}\in \mathfrak{g}$ and $\phi :M\rightarrow \mathfrak{g}$ is the
addition map. Denote by $\mu _{M}$ the Liouville measure on $M$, $\mu _{M}=$ 
$\mu _{X_{1}}\times \cdot \cdot \cdot \times \mu _{X_{L}}$. Assume that the
Duistermaat-Heckman measure on $\phi (M),$ $\mu _{X_{1}}\ast \cdot \cdot
\cdot \ast \mu _{X_{L}}$, is an absolutely continuous measure. Then the
manifold $N=\phi ^{-1}(\mathfrak{t}^{0})$ is a Hamiltonian $T$-space and if $%
\mu _{N}$ is its Liouville measure, there is a constant $c>0$ such that%
\begin{equation*}
\int_{M}f\circ \phi d\mu _{M}=c\int_{N}f\circ \phi \text{ }\left\vert \pi
\circ \phi \right\vert \text{ }d\mu _{N}
\end{equation*}%
for all $G$-invariant Borel functions $f$.
\end{proposition}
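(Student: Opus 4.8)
The plan is to prove the change-of-variables formula by relating the Liouville measure $\mu_M$ on $M = O_{X_1}\times\cdots\times O_{X_L}$ to the Liouville measure $\mu_N$ on $N = \phi^{-1}(\mathfrak{t}^0)$ via the $G$-action. First I would observe that, since the Duistermaat--Heckman measure $\mu_{X_1}\ast\cdots\ast\mu_{X_L}$ is assumed absolutely continuous, the set of regular values of $\phi$ has full measure in $\mathfrak{g}$, and in particular $\mathfrak{t}^0$ (which meets every adjoint orbit of a regular element) consists generically of regular values; hence $N = \phi^{-1}(\mathfrak{t}^0)$ is a genuine (open, dense up to measure zero) submanifold of $M$ on which $T$ acts, and the restriction of $\Omega_M$ makes it a Hamiltonian $T$-space with moment map $\phi|_N$ taking values in $\mathfrak{t}^0$. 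The key structural fact is that the map $G\times_T N \to M$, $[g,n]\mapsto g\cdot n$, is (up to a measure-zero set and up to the finite order of the Weyl group or the generic stabilizer) a diffeomorphism onto the full-measure subset of $M$ lying over the regular values of $\phi$: every point of $M$ whose image under $\phi$ is regular can be moved by $G$ so that its image lies in $\mathfrak{t}^0$, and the ambiguity is exactly the Weyl group.

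Next I would compute the Jacobian of this map with respect to the relevant measures. Pulling the Liouville form $\mu_M = \Omega_M^{\dim M/2}/(\dim M/2)!$ back along $G\times_T N\to M$, and using that $\Omega_M$ is $G$-invariant, the fibrewise contribution of the $G/T$ directions is governed by the nondegeneracy of the form $\Omega_Z$ in the transverse directions $[\mathfrak{g},Z]$; the standard computation (the Kirillov/Harish-Chandra type calculation, see \cite{DHV}) shows that this Jacobian factor is exactly $|\pi\circ\phi|$, since $\pi(H) = \prod_{\alpha\in\Phi^+}\alpha(H)$ is precisely the Pfaffian of the symplectic form restricted to the orbit directions at a point mapping to $H\in\mathfrak{t}$. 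The normalization of Haar measure $dg$ on $G$ relative to the Liouville measure on the coadjoint orbits and on $N$ produces the overall positive constant $c$.

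Finally I would assemble these pieces: for a $G$-invariant Borel function $f$ on $\mathfrak{g}$, write $\int_M (f\circ\phi)\,d\mu_M$ as an integral over $G\times_T N$ using the diffeomorphism above, integrate out the $G/T$ fibre (on which $f\circ\phi$ is constant by $G$-invariance of $f$ and equivariance of $\phi$, noting $\phi(g\cdot n) = \mathrm{Ad}(g)\phi(n)$ so $f(\phi(g\cdot n)) = f(\phi(n))$), and pick up the Jacobian $|\pi\circ\phi|$, arriving at $c\int_N (f\circ\phi)\,|\pi\circ\phi|\,d\mu_N$. The main obstacle I anticipate is the careful justification that $N$ is honestly a smooth Hamiltonian $T$-space and that the decomposition $M \supseteq$ (regular part) $\cong G\times_T N$ holds up to sets of Liouville measure zero — this requires knowing that the non-regular values of $\phi$ are $\mu_M$-null, which is exactly where the absolute continuity hypothesis on the Duistermaat--Heckman measure is used, together with the fact that the Duistermaat--Heckman measure is a finite sum of pushforwards along the walls otherwise. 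The Jacobian computation itself, while the technical heart, is classical and I would cite \cite{DHV} rather than reproduce it.
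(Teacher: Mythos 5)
The paper itself gives no proof of this proposition --- it is quoted as a known change-of-variables formula and attributed to \cite{DHV} --- so there is nothing to match your argument against line by line; your sketch is essentially the standard cross-section/Weyl-integration argument that underlies the cited result, and its main pillars are sound: the identification $G\times_{T}N\rightarrow \phi ^{-1}(\mathfrak{g}^{reg})$ up to null sets, the use of absolute continuity of the Duistermaat--Heckman measure to guarantee that $\phi ^{-1}$ of the non-regular elements is $\mu _{M}$-null (the non-regular elements form a Lebesgue-null set in $\mathfrak{g}$), and the identification of the Jacobian $\left\vert \pi \circ \phi \right\vert $ as the Pfaffian of the symplectic form in the orbit directions, for which citing \cite{DHV} is appropriate.

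Two points in your write-up are off, though neither destroys the architecture. First, $N=\phi ^{-1}(\mathfrak{t}^{0})$ is \emph{not} an ``open, dense up to measure zero'' subset of $M$: it is the symplectic cross-section, a $T$-invariant symplectic submanifold of codimension $\dim G-\mathrm{rank}\,G$ (already for $M=O_{X}$ a single regular orbit, $N$ is one point). What is full-measure in $M$ is its saturation $G\cdot N=\phi ^{-1}(\mathfrak{g}^{reg})$. Relatedly, the smoothness of $N$ does not come from a Sard-type statement about regular \emph{values} of $\phi $ being generic in $\mathfrak{t}^{0}$ (that reasoning would not give smoothness over the exceptional values anyway); it comes from the Guillemin--Sternberg symplectic cross-section theorem, which only uses that every point of $\mathfrak{t}^{0}$ is a regular \emph{element} of $\mathfrak{g}$ --- no absolute continuity is needed for this step. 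The absolute continuity hypothesis enters exactly where you later say it does, in making the non-regular locus negligible. Second, once you cut down to the open chamber $\mathfrak{t}^{0}$ there is no Weyl-group ambiguity: a regular adjoint orbit meets $\mathfrak{t}^{0}$ in a single point, and the stabilizer of a regular $H\in \mathfrak{t}^{0}$ is $T$ itself, so $G\times _{T}N\rightarrow \phi ^{-1}(\mathfrak{g}^{reg})$ is a genuine bijection; the Weyl group would appear only if you used all of the regular part of $\mathfrak{t}$, and in any case such a finite factor is absorbed into $c$. With these corrections your outline is a correct route to the formula, consistent with the reference the paper relies on.
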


\begin{remark}
The assumption that $\mu _{X_{1}}\ast \cdot \cdot \cdot \ast \mu _{X_{L}}$
is absolutely continuous is equivalent to saying that $O_{X_{1}}+\cdot \cdot
\cdot +O_{X_{L}}=\phi (M)$ has positive Lebesgue measure in $\mathfrak{g}$,
see \cite{CJM} or \cite{Ra}.
\end{remark}

As $N$ is not compact we cannot apply the Duistermaat-Heckman theorem
directly. Rather, we will instead consider the compact symplectic orbifolds $%
N_{\varepsilon }$ formed by taking the symplectic cut in each root vector
direction, thereby obtaining nested, compact sets whose union over all $%
\varepsilon >0$ is $N$. An easy limiting argument shows that 
\begin{equation*}
\int_{N}f\circ \phi \text{ }\left\vert \pi \circ \phi \right\vert \text{ }%
d\mu _{N}=\lim_{\varepsilon \rightarrow 0}\int_{N_{\varepsilon }}f\circ \phi 
\text{ }\left\vert \pi \circ \phi \right\vert \text{ }d\mu _{N_{\varepsilon
}}.
\end{equation*}

Denote by $\nu _{M}$ the pushforward of $\mu _{M}$ under $\phi $ and by $\nu
_{N_{\varepsilon }}$ the pushforward of the Liouville measure $\mu
_{N_{\varepsilon }}$ on $N_{\varepsilon }$. As the Duistermaat-Heckman
theorem holds for orbifolds (\cite{DG}), we can apply it to $\nu
_{N_{\varepsilon }}$ to deduce that there are functions, $P_{\varepsilon },$
which are locally polynomial and of bounded degree, such that 
\begin{eqnarray*}
\int_{\mathfrak{g}}f(X)d\nu _{M}(X) &=&\int_{M}f\circ \phi d\mu
_{M}=c\lim_{\varepsilon }\int_{N_{\varepsilon }}f\circ \phi \text{ }%
\left\vert \pi \circ \phi \right\vert \text{ }d\mu _{N_{\varepsilon }} \\
&=&\lim_{\varepsilon }\int_{\mathfrak{t}^{0}}f(H)\left\vert \pi
(H)\right\vert d\nu _{N_{\varepsilon }}(H) \\
&=&\lim_{\varepsilon }\int_{\mathfrak{t}^{0}}f(H)\left\vert \pi
(H)\right\vert P_{\varepsilon }(H)dH.
\end{eqnarray*}

On the other hand, if $R$ denotes the Radon-Nikodym derivative of $\nu _{M}$%
, then the Weyl integration formula gives that for $G$-invariant functions $%
f $,%
\begin{equation*}
\int_{\mathfrak{g}}f(X)d\nu _{M}(X)=\int_{\mathfrak{g}}f(X)R(X)dX=\int_{%
\mathfrak{t}^{0}}f(H)R(H)\left\vert \pi (H)\right\vert ^{2}\text{ }dH.
\end{equation*}

Uniqueness of the Radon-Nikodym derivative implies that up to a
normalization constant, $R=\lim_{\varepsilon }P_{\varepsilon }/\left\vert
\pi \right\vert $ on $\mathfrak{t}^{0}$. We further note that $%
P=\lim_{\varepsilon }P_{\varepsilon }$ is compactly supported on $\mathfrak{t%
}$ as $R$ is supported on the compact set $\phi (M)$. Properties of the
moment map ensure that there are only finitely many connected components of
the set of regular elements of $\phi ,$ hence $P$ is bounded.

Using the Weyl integration formula again, we can compute the $L^{2}$ norm of 
$R$ obtaining%
\begin{equation*}
\int_{\mathfrak{g}}\left\vert R\right\vert ^{2}dX=\int_{\mathfrak{t}%
^{0}}\left\vert P(H)\right\vert ^{2}dH<\infty .
\end{equation*}
Thus the assumption that $\nu _{M}$ is absolutely continuous guarantees that
its Radon-Nikodym derivative is in $L^{1}\cap L^{2}(\mathfrak{g)}$. Since a
product of orbital measures is known to be either purely singular or
absolutely continuous (c.f. \cite{CJM} or \cite{Ra}) this gives an abstract
proof of the $L^{2}$-singular dichotomy:

\begin{corollary}
Let $X_{i}\in \mathfrak{g}$ for $i=1,...,L$. Then $\mu _{X_{1}}\ast \cdot
\cdot \cdot \ast \mu _{X_{L}}$ is either singular or its Radon-Nikodym
derivative is in $L^{1}\cap L^{2}(\mathfrak{g)}$.
\end{corollary}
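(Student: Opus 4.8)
The plan is to assemble the pieces that have already been set up in the discussion preceding the corollary, so the proof is essentially a matter of invoking them in the right order and citing the known singular-or-absolutely-continuous dichotomy. First I would recall that $\nu_M=\mu_{X_1}\ast\cdots\ast\mu_{X_L}$ is the pushforward under the addition map $\phi$ of the Liouville measure on $M=O_{X_1}\times\cdots\times O_{X_L}$, and that by the result cited in the Remark (\cite{CJM} or \cite{Ra}), $\nu_M$ is either purely singular with respect to Lebesgue measure on $\mathfrak g$, or it is absolutely continuous. In the first case there is nothing more to prove, so we may assume $\nu_M$ is absolutely continuous and must show its Radon-Nikodym derivative $R$ lies in $L^2(\mathfrak g)$ (it is automatically in $L^1$, being the density of a probability measure).

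In the absolutely continuous case, I would invoke Proposition (the Duistermaat--Heckman change of variables of \cite{DHV}) to write $N=\phi^{-1}(\mathfrak t^0)$ as a Hamiltonian $T$-space with Liouville measure $\mu_N$ satisfying $\int_M f\circ\phi\,d\mu_M=c\int_N f\circ\phi\,|\pi\circ\phi|\,d\mu_N$ for all $G$-invariant Borel $f$. Then, using the symplectic cuts $N_\varepsilon$ and the limiting argument already described, together with the Duistermaat--Heckman theorem for orbifolds (\cite{DG}), I obtain locally polynomial functions $P_\varepsilon$ of uniformly bounded degree with $\int_{\mathfrak g}f\,d\nu_M=\lim_\varepsilon\int_{\mathfrak t^0}f(H)\,|\pi(H)|\,P_\varepsilon(H)\,dH$. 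Comparing this with the Weyl integration formula expression $\int_{\mathfrak g}f\,d\nu_M=\int_{\mathfrak t^0}f(H)\,R(H)\,|\pi(H)|^2\,dH$ and using uniqueness of the Radon--Nikodym derivative, I conclude $R=P/|\pi|$ on $\mathfrak t^0$, where $P=\lim_\varepsilon P_\varepsilon$ is, as noted, compactly supported and bounded (finitely many chambers of regular values of $\phi$). Finally, one more application of the Weyl integration formula gives $\int_{\mathfrak g}|R|^2\,dX=\int_{\mathfrak t^0}|P(H)|^2\,dH<\infty$ since $P$ is bounded with compact support, so $R\in L^2(\mathfrak g)$, completing the proof.

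The genuine content of the argument is not in the corollary itself but in establishing that $P$ is bounded and compactly supported, which is precisely where the Duistermaat--Heckman polynomiality (the local polynomial structure of $P_\varepsilon$ with bounded degree, plus the finiteness of the chamber decomposition coming from properness of the moment map) does the work; the potential subtlety I would watch is the passage to the limit $\varepsilon\to0$, ensuring that the locally polynomial structure and the degree bound survive so that $P$ is genuinely locally polynomial of bounded degree on the finitely many components and hence bounded on its compact support. Since all of this has already been carried out in the text above the corollary, the proof is short: handle the singular case trivially, and in the absolutely continuous case quote the displayed computation to get $R\in L^1\cap L^2(\mathfrak g)$.
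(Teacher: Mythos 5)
Your proposal is correct and follows essentially the same route as the paper: the corollary is exactly the combination of the singular/absolutely-continuous dichotomy from \cite{CJM} or \cite{Ra} with the preceding Duistermaat--Heckman computation showing $R=P/|\pi|$ on $\mathfrak{t}^{0}$ with $P$ bounded and compactly supported, and the Weyl integration formula giving $\int_{\mathfrak{g}}|R|^{2}=\int_{\mathfrak{t}^{0}}|P|^{2}<\infty$. Your observation that the real content lies in the boundedness and compact support of $P$ (and the passage to the limit in $\varepsilon$) matches where the paper places the work as well.
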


\section{The Dichotomy for Orbital Measures on the Group}

It is natural to ask if there is a similar dichotomy result for orbital
measures on the compact Lie group, where by an orbital measure in this
setting we mean the invariant probability measure $\mu _{x}$, supported on
the conjugacy class $C_{x}$ generated by $x$. This measure integrates
according to the rule%
\begin{equation*}
\int_{G}fd\mu _{x}=\int_{G}f(Ad(g)x)dg\text{ for }f\in C(G)\text{.}
\end{equation*}%
In \cite{Adv} it was shown that the $L^{2}$-singular dichotomy does hold for
convolutions of a given orbital measure. This was proven by using
representation theory and the Peter-Weyl theorem to calculate the $L^{2}(G)$
norm for such measures. (In fact, the strategy of the earlier work on the $%
L^{2}$-singular dichotomy was to do these calculations first for orbital
measures on the group and then use the answers to study the analogous
problem on the Lie algebra.)

Applying this technique to convolutions of different orbital measures is
cumbersome, however, and has been done only for $G=SU(n)$ in \cite{Wr}.
Instead, in this note we will take the opposite approach and will see how to
use the Lie algebra dichotomy to deduce the dichotomy for certain
convolution products of (possibly distinct) orbital measures on the group.

This transference argument will rely upon the wrapping map $\Psi $
introduced in \cite{DW}. Given a measure $\mu $ compactly supported on $%
\mathfrak{g}$, we define a measure $\Psi (\mu )$ on $G$ by 
\begin{equation*}
\int_{G}fd\Psi (\mu )=\int_{\mathfrak{g}}jf\circ \exp d\mu \text{ for }f\in
C(G)
\end{equation*}%
where $j$ is the analytic square root of the determinant of the exponential
map satisfying $j(0)=1$; 
\begin{equation*}
j(H)=\prod_{\alpha \in \Phi ^{+}}\frac{\sin (\alpha (H)/2)}{\alpha (H)/2}%
\text{ for }H\in \mathfrak{t.}
\end{equation*}%
It was shown in \cite{DW} that $\Psi (\mu \ast \nu )=\Psi (\mu )\ast \Psi
(\nu )$ and it is easy to see from the definition that if $\mu \in L^{1}(%
\mathfrak{g)}$ with Radon-Nikodym derivative $F$, then $\Psi (\mu )\in
L^{1}(G)$ and has Radon Nikodym derivative $\Psi (F)$. In the lemma below we
list some further properties of $\Psi $.

Notation: Let $\Gamma =\{\exp ^{-1}(e)\}\bigcap \mathfrak{t}$ ($e$ being
the identity in $G)$.

\begin{lemma}
\label{prelim}(1) If $f\in C^{\infty }(\mathfrak{g)}$ is $G$-invariant, then 
$\Psi (jf)$ is $G$-invariant, and 
\begin{equation*}
\Psi (jf)(\exp H)=\sum_{\alpha \in \Gamma }f(H+\gamma )\text{ for all }H\in 
\mathfrak{t.}
\end{equation*}

(2) For any $H\in \mathfrak{t}$, $\Psi (\mu _{H})=j(H)\mu _{\exp H}$. More
generally, if $X_{i}\in \mathfrak{t}$ and $x_{i}=\exp X_{i}$, then $\Psi
(\mu _{X_{1}}\ast \cdot \cdot \cdot \ast \mu
_{X_{L}})=\prod_{i=1}^{L}j(X_{i})\mu _{x_{1}}\ast \cdot \cdot \cdot \ast \mu
_{x_{L}}$.
\end{lemma}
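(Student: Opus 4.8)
The plan is to establish (2) first, as it is a direct unwinding of the definitions, and then to deduce (1) from the Weyl integration formulas on $\mathfrak{g}$ and on $G$.

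For (2), given $\phi\in C(G)$ and $H\in\mathfrak{t}$, the definitions of $\Psi$ and of $\mu_{H}$ give
\begin{equation*}
\int_{G}\phi\,d\Psi(\mu_{H})=\int_{\mathfrak{g}}j(X)\,\phi(\exp X)\,d\mu_{H}(X)=\int_{G}j(Ad(g)H)\,\phi\big(\exp(Ad(g)H)\big)\,dg.
\end{equation*}
Since $j$ is $G$-invariant, $j(Ad(g)H)=j(H)$; and since $\exp(Ad(g)H)=g(\exp H)g^{-1}$, this equals $j(H)\int_{G}\phi\big(g(\exp H)g^{-1}\big)\,dg=j(H)\int_{G}\phi\,d\mu_{\exp H}$, whence $\Psi(\mu_{H})=j(H)\mu_{\exp H}$. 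The ``more generally'' assertion then follows by iterating $\Psi(\mu\ast\nu)=\Psi(\mu)\ast\Psi(\nu)$ (from \cite{DW}) and pulling the scalars $j(X_{i})$ out of the multilinear convolution product.

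For (1), I would first check that $\Psi(jf)$ is $G$-invariant: since $jf$ is $Ad$-invariant on $\mathfrak{g}$ and $\exp(Ad(h)X)=h(\exp X)h^{-1}$, the substitution $X\mapsto Ad(h^{-1})X$ shows $\int_{G}\phi(hyh^{-1})\,d\Psi(jf)(y)=\int_{G}\phi\,d\Psi(jf)$ for every $\phi\in C(G)$ and $h\in G$. Granting this, it is enough to evaluate $\int_{G}\phi\,d\Psi(jf)$ for class functions $\phi$, because averaging $\phi$ over conjugation changes neither this integral (as $\Psi(jf)$ is conjugation invariant) nor $\int_{\mathfrak{g}}j(X)^{2}f(X)\,\phi(\exp X)\,dX$ (as $j^{2}f$ is $Ad$-invariant). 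For a class function $\phi$ the map $X\mapsto\phi(\exp X)$ is $G$-invariant, so the Weyl integration formula on $\mathfrak{g}$ gives, with some constant $c$,
\begin{equation*}
\int_{G}\phi\,d\Psi(jf)=\int_{\mathfrak{g}}j(X)^{2}f(X)\,\phi(\exp X)\,dX=c\int_{\mathfrak{t}}j(H)^{2}f(H)\,\phi(\exp H)\,|\pi(H)|^{2}\,dH.
\end{equation*}
The key algebraic point is the identity $j(H)^{2}|\pi(H)|^{2}=\prod_{\alpha\in\Phi^{+}}4\sin^{2}(\alpha(H)/2)$, which is the squared Weyl denominator of $G$ at $\exp H$, i.e.\ the density in the Weyl integration formula on $G$. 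Writing $\mathfrak{t}$ as the disjoint union of the translates $F+\gamma$, $\gamma\in\Gamma$, of a fundamental domain $F$ for the lattice $\Gamma$, and using $\exp(H+\gamma)=\exp H$ for $\gamma\in\Gamma$, the last integral folds (after transporting $dH$ on $F$ to Haar measure on $T$ via $\exp$) to a multiple of $\int_{T}\big(\sum_{\gamma\in\Gamma}f(H+\gamma)\big)\,\phi(t)\,\big(\prod_{\alpha\in\Phi^{+}}4\sin^{2}(\alpha(H)/2)\big)\,dt$, with $t=\exp H$ and the $H$-dependent factors read off any preimage $H$ of $t$ (they are independent of the choice). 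Here $\sum_{\gamma}f(H+\gamma)$ is a well-defined class function on $G$: it is $\Gamma$-periodic and, since $f$ is $G$-invariant, $W$-invariant, and it is locally finite when $f$ has compact support. Comparing with the Weyl integration formula on $G$ applied to the class functions $\phi$ and $\exp H\mapsto\sum_{\gamma}f(H+\gamma)$ then identifies the density of $\Psi(jf)$, up to a constant, with $\exp H\mapsto\sum_{\gamma\in\Gamma}f(H+\gamma)$; that the constant is $1$ is precisely the normalization of $\Psi$ fixed in \cite{DW}, and may be confirmed by testing on an $f$ supported near $0$, where only $\gamma=0$ contributes and $\exp$ is a local diffeomorphism.

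The only delicate part, entirely within step (1), is the bookkeeping: justifying the reduction to class functions, matching Haar measure on $T$ with Lebesgue measure on a fundamental domain of $\Gamma\subset\mathfrak{t}$ under $\exp$, and pinning down the overall constant. This is exactly the computation underlying the wrapping map of \cite{DW}; part (2) and the convolution identity are formal.
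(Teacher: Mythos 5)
Your proposal is correct. For part (2) your argument is exactly the paper's: unwind the definitions of $\Psi$ and $\mu_{H}$, use the $G$-invariance of $j$ and $\exp(Ad(g)H)=g(\exp H)g^{-1}$, and then iterate the convolution identity $\Psi(\mu\ast\nu)=\Psi(\mu)\ast\Psi(\nu)$ from \cite{DW} to pull out the scalars $j(X_{i})$ (your use of arbitrary test functions rather than class functions is an inessential variation). The genuine difference is in part (1): the paper does not prove it at all, simply citing \cite{DW}, whereas you reconstruct the wrapping formula. Your reconstruction is sound and is essentially the argument underlying \cite{DW}: reduce to class functions (legitimate, since both the measure $\Psi(jf)$ and the candidate density are conjugation-invariant and testing against class functions identifies an $L^{1}$ class function a.e.), apply the Weyl integration formulas on $\mathfrak{g}$ and on $G$, use the identity $j(H)^{2}|\pi(H)|^{2}=\prod_{\alpha\in\Phi^{+}}4\sin^{2}(\alpha(H)/2)$ (which is also $\Gamma$-periodic, so the folding over translates of a fundamental domain is valid), and pin the constant by testing on $f$ supported near $0$, where the stated normalizations make $\exp$ measure-preserving against $|j|^{2}dX$. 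Two small points you gloss over but which are easily repaired: $f$ must be compactly supported for $\Psi(jf)$ to be defined and for the periodized sum to be locally finite (you note the latter), and your comparison of densities identifies $\Psi(jf)$ only almost everywhere, so the pointwise statement for all $H$ requires observing that both sides are continuous ($\sum_{\gamma}f(\cdot+\gamma)$ is smooth, and it then serves as the continuous representative of the density). What your route buys is a self-contained verification of the wrapping formula rather than an appeal to the literature; what the paper's route buys is brevity, since (1) is a known result of Dooley--Wildberger.
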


\begin{proof}
(1) is shown in \cite{DW}. To prove (2) we simply note that as $j$ is $G$%
-invariant, the definition of the orbital measure implies that for any $G$%
-invariant, continuous function $f$ on $G$, we have%
\begin{eqnarray*}
\int_{G}fd\Psi (\mu _{H}) &=&\int_{\mathfrak{g}}j(X)f\circ \exp (X)d\mu
_{H}=\int_{G}j(H)f\circ \exp (Ad(g)H)dg\text{ } \\
&=&j(H)\int_{G}f(Ad(g)\exp H)dg=j(H)\int_{G}fd\mu _{\exp H}\text{.}
\end{eqnarray*}
\end{proof}

We will deduce the dichotomy property from the following result that may be
of independent interest.

\begin{proposition}
\label{group}Let $K$ be a compact subset of $\mathfrak{g}$ and $1\leq
p<\infty $. There is a constant $C=C(K,p)$ such that 
\begin{equation*}
\left\Vert \Psi (jf)\right\Vert _{L^{p}(G)}\leq C\left\Vert
j^{2/p}f\right\Vert _{L^{p}(\mathfrak{g)}}
\end{equation*}%
for all $G$-invariant, Borel functions supported on $K$.
\end{proposition}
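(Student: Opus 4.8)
The plan is to push everything onto the Cartan subalgebra $\mathfrak{t}$ by means of two instances of the Weyl integration formula, one on $G$ and one on $\mathfrak{g}$, and to use Lemma~\ref{prelim}(1) to replace the restriction of $\Psi(jf)$ to the maximal torus by a lattice sum. The identity that drives the argument is that the square of the Weyl denominator factors as
\begin{equation*}
\prod_{\alpha\in\Phi^{+}}\bigl(2\sin(\alpha(H)/2)\bigr)^{2}=\left\vert \pi(H)\right\vert ^{2}j(H)^{2}\qquad(H\in\mathfrak{t}),
\end{equation*}
and that the left-hand side is $\Gamma$-periodic (it descends to a function on $T=\mathfrak{t}/\Gamma$, namely the Weyl density), whereas $\left\vert \pi\right\vert ^{2}$ alone is the weight appearing in the Lie algebra Weyl formula.

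I would first reduce to $f\in C^{\infty}(\mathfrak{g})$; for a general $G$-invariant Borel $f$ supported on $K$ one approximates monotonically by $G$-invariant smooth functions supported in a fixed compact neighbourhood of $K$ (alternatively, one verifies the lattice-sum formula for Borel $f$ directly by pairing the measure $\Psi\bigl((jf)\,dX\bigr)$ against $G$-invariant test functions and invoking the two Weyl formulas). For smooth $G$-invariant $f$, Lemma~\ref{prelim}(1) gives that $\Psi(jf)$ is $G$-invariant with $\Psi(jf)(\exp H)=\sum_{\gamma\in\Gamma}f(H+\gamma)$; since $f$ is supported on the compact set $K$ and $\Gamma$ is discrete, this sum has at most $N=N(K)$ nonzero terms, uniformly in $H$. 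Applying the Weyl integration formula on $G$ to the class function $\left\vert \Psi(jf)\right\vert ^{p}$, then the displayed factorisation, then the convexity bound $\bigl\vert \sum_{i=1}^{N}a_{i}\bigr\vert ^{p}\leq N^{p-1}\sum_{i=1}^{N}\left\vert a_{i}\right\vert ^{p}$ (valid for $p\geq1$), yields
\begin{equation*}
\left\Vert \Psi(jf)\right\Vert _{L^{p}(G)}^{p}\leq c_{1}N^{p-1}\int_{D}\sum_{\gamma\in\Gamma}\left\vert f(H+\gamma)\right\vert ^{p}\left\vert \pi(H)\right\vert ^{2}j(H)^{2}\,dH ,
\end{equation*}
where $D$ is a fundamental domain for $\Gamma$ in $\mathfrak{t}$ and $c_{1}$ depends only on the normalisation of Haar measure. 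Because $\left\vert \pi\right\vert ^{2}j^{2}$ is $\Gamma$-periodic, the substitutions $H\mapsto H+\gamma$ unfold the lattice sum onto all of $\mathfrak{t}$, turning the right-hand side into $c_{1}N^{p-1}\int_{\mathfrak{t}}\left\vert f(H)\right\vert ^{p}\left\vert \pi(H)\right\vert ^{2}j(H)^{2}\,dH$. Finally the Weyl integration formula on $\mathfrak{g}$, run in reverse, identifies this with $c_{2}N^{p-1}\int_{\mathfrak{g}}\left\vert j(X)\right\vert ^{2}\left\vert f(X)\right\vert ^{p}\,dX=c_{2}N^{p-1}\left\Vert j^{2/p}f\right\Vert _{L^{p}(\mathfrak{g})}^{p}$. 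Taking $p$-th roots gives the assertion with $C=(c_{2}N^{p-1})^{1/p}$, which depends only on $K$ and $p$.

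I do not anticipate a serious obstacle: once Lemma~\ref{prelim}(1) is available, the proposition is a bookkeeping consequence of the two Weyl integration formulas, and the exponent $2/p$ on $j$ appears precisely because the Weyl density on the group carries the extra factor $j^{2}$ relative to the factor $\left\vert \pi\right\vert ^{2}$ on the algebra side, so that $\left\vert j^{2/p}f\right\vert ^{p}\left\vert \pi\right\vert ^{2}=\left\vert f\right\vert ^{p}\left\vert \pi\right\vert ^{2}j^{2}$ is exactly the integrand produced by the group side. The only points requiring a word of care are the reduction from Borel to smooth $f$ (routine monotone approximation) and the finiteness and uniformity of $N$ (immediate from the compactness of $K$ together with the discreteness of $\Gamma$); neither is delicate.
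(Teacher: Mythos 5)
Your argument is correct, and it rests on the same pillars as the paper's proof: the Weyl integration formula on $G$ and on $\mathfrak{g}$, the wrapping identity of Lemma \ref{prelim}(1), the factorisation $\left\vert \Delta (\exp H)\right\vert ^{2}=\left\vert \pi (H)\right\vert ^{2}j(H)^{2}$, unfolding over a fundamental domain for $\Gamma $, and the finiteness of the relevant lattice translates coming from the compactness of $K$. The one genuine difference is how the $p$-th power of the lattice sum is handled: the paper writes $\left\vert \Psi (jf)\right\vert ^{p}=\left\vert \Psi (jf)\right\vert \cdot \left\vert \Psi (jf)\right\vert ^{p-1}$, expands only the first factor, unfolds, and then splits by H\"older's inequality into two integrals $I_{1}\cdot I_{2}$, the second of which is estimated by a second appeal to Lemma \ref{prelim}(1); you instead invoke the uniform bound $N=N(K)$ on the number of nonzero terms $f(H+\gamma )$ and the convexity inequality $\left\vert \sum_{i=1}^{N}a_{i}\right\vert ^{p}\leq N^{p-1}\sum_{i=1}^{N}\left\vert a_{i}\right\vert ^{p}$, which lets you unfold once and finish in a single pass, with the explicit constant $C=(c_{2}N^{p-1})^{1/p}$. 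This is a mild but real simplification (no H\"older, only one use of the wrapping formula inside the estimate); what it costs is the need for the uniformity of $N$ over all $H$, which holds because $\Gamma =\exp ^{-1}(e)\cap \mathfrak{t}$ is a discrete subgroup, hence uniformly discrete, so any translate of $K$ contains a bounded number of its points (alternatively, restrict $H$ to a precompact fundamental domain). Your care about extending Lemma \ref{prelim}(1) from smooth to Borel $G$-invariant $f$ is also warranted --- the paper applies the lemma to such $f$ without comment --- though the cleaner route is your parenthetical one (verify the lattice-sum formula by pairing $\Psi \bigl((jf)\,dX\bigr)$ against class functions and using the two Weyl formulas) rather than monotone approximation by smooth functions, which is awkward for general Borel $f$.
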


\begin{proof}
As in \cite{DW}, we will normalize the Haar measures $dg$ on $G$ and $dt$ on 
$T$ to have norm one and normalize the Lebegue measures $dX$ on $\mathfrak{g}
$ and $dH$ on $\mathfrak{t}$ so that if $U$ is a neighbourhood of $0$ in $%
\mathfrak{g}$ (respectively $\mathfrak{t)}$ on which the exponential map is
injective, then for continuous $f$ on $G$ (or $T)$ we have 
\begin{equation*}
\int_{U}f\circ \exp X\left\vert j(X)\right\vert ^{2}dX=\int_{\exp U}f(g)dg
\end{equation*}%
(respectively, $\int_{U}f\circ \exp HdH=\int_{\exp U}f(t)dt$).

Let $|\Delta (\exp H)|^{2}=\left\vert j\pi (H)\right\vert ^{2}$. The Weyl
integration formula gives%
\begin{equation*}
\left\Vert \Psi (jf)\right\Vert _{L^{p}(G)}^{p}=\frac{1}{\left\vert
W\right\vert }\int_{T}\left\vert \Delta (t)\right\vert ^{2}\left\vert \Psi
(jf(t))\right\vert ^{p}dt.
\end{equation*}

Let $t_{\Gamma }$ be a fundamental domain for $\Gamma $ in $\mathfrak{t}$.
Our choice of normalization and Lemma \ref{prelim}(1) shows that 
\begin{eqnarray*}
\int_{T}\left\vert \Delta \right\vert ^{2}\left\vert \Psi (jf)\right\vert
^{p}dt &=&\int_{t_{\Gamma }}\left\vert \Delta (\exp H)\right\vert
^{2}\left\vert \Psi (jf)\circ \exp H\right\vert ^{p}dH \\
&=&\int_{t_{\Gamma }}\left\vert \Delta (\exp H)\right\vert ^{2}|\sum_{\gamma
\in \Gamma }f(H+\gamma )|\left\vert \Psi (jf)\circ \exp H\right\vert ^{p-1}dH
\\
&\leq &\sum_{\gamma }\int_{t_{\Gamma }}\left\vert \Delta (\exp H)\right\vert
^{2}\left\vert f(H+\gamma )\right\vert \left\vert \Psi (jf)\circ \exp
H\right\vert ^{p-1}dH.
\end{eqnarray*}%
Since $\exp (H+\gamma )=\exp H$ for $\gamma \in \Gamma $ and $f$ is
supported on the compact set $K,$ this sum is equal to 
\begin{equation*}
\int_{\mathfrak{t}\cap K}\left\vert \Delta (\exp H)\right\vert
^{2}\left\vert f(H)\right\vert \left\vert \Psi (jf)\circ \exp H\right\vert
^{p-1}dH.
\end{equation*}%
Applying Holder's inequality with conjugate indices $p,p^{\prime }$, it
follows that this integral is bounded by%
\begin{equation*}
\left( \int_{\mathfrak{t}\cap K}\left\vert \Delta (\exp H)\right\vert
^{2}\left\vert f(H)\right\vert ^{p}dH\right) ^{1/p}\left( \int_{\mathfrak{t}%
\cap K}\left\vert \Delta (\exp H)\right\vert ^{2}\left\vert \Psi (jf)\circ
\exp H\right\vert ^{p}dH\right) ^{1/p^{\prime }}
\end{equation*}%
\begin{equation*}
:=I_{1}\cdot I_{2}.
\end{equation*}%
Another application of the Weyl integration formula gives that 
\begin{equation*}
I_{1}=\left( \int_{\mathfrak{g}}|j(X)|^{2}|f(X)|^{p}dX\right)
^{1/p}=\left\Vert j^{2/p}f\right\Vert _{L^{p}(\mathfrak{g)}}
\end{equation*}%
while Lemma \ref{prelim}(1) gives 
\begin{equation*}
I_{2}=\left( \int_{\mathfrak{t\cap }K}\left\vert \Delta (\exp H)\right\vert
^{2}|\sum_{\gamma \in \Gamma }f(H+\gamma )|^{p}dH\right) ^{1/p^{\prime }}.
\end{equation*}

Since $\Gamma $ is a discrete set, there are only finitely many $\gamma \in
\Gamma $ such that $H+\gamma \in K$ for some $H\in K$. Hence for a constant $%
C$ (which may change from one occurrence to another) 
\begin{eqnarray*}
I_{2} &\leq &\left( C\int_{\mathfrak{t}}\left\vert \Delta (\exp
H)\right\vert ^{2}\left\vert f(H+\gamma )\right\vert ^{p}dH\right)
^{1/p^{\prime }} \\
&\leq &\left( C\int_{\mathfrak{g}}\left\vert j(X)\right\vert ^{2}\left\vert
f(X)\right\vert ^{p}dH\right) ^{1/p^{\prime }}=C\left\Vert
j^{2/p}f\right\Vert _{L^{p}(\mathfrak{g)}}^{p/p^{\prime }}.
\end{eqnarray*}%
Combining the bounds on $I_{1},I_{2}$ gives the desired result.
\end{proof}

\begin{corollary}
\label{main}Assume that $x_{i}=\exp X_{i}$ and $\dim C_{x_{i}}=\dim
O_{X_{i}} $ for $i=1,...,L$. Then $\mu _{x_{1}}\ast \cdot \cdot \cdot \ast
\mu _{x_{L}} $ is either singular with respect to Haar measure on $G$ or its
Radon-Nikodym derivative is in $L^{2}(G\mathfrak{)}$.
\end{corollary}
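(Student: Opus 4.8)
The plan is to deduce Corollary~\ref{main} by combining the Lie algebra dichotomy (Corollary~2.1), the wrapping map's multiplicativity, Lemma~\ref{prelim}(2), and the $L^p$-transference estimate of Proposition~\ref{group}. First I would reduce to the case $X_i \in \mathfrak{t}$: since each $\mu_{x_i}$ is conjugation-invariant and every element of $G$ is conjugate to one in $T$, we may replace $X_i$ by a $G$-conjugate lying in $\mathfrak{t}$ without changing $\mu_{X_i}$ or $\mu_{x_i}$, and the dimension hypothesis $\dim C_{x_i} = \dim O_{X_i}$ is preserved. With all $X_i \in \mathfrak{t}$, Lemma~\ref{prelim}(2) gives $\Psi(\mu_{X_1}\ast\cdots\ast\mu_{X_L}) = \big(\prod_i j(X_i)\big)\, \mu_{x_1}\ast\cdots\ast\mu_{x_L}$, so up to the nonzero scalar $\prod_i j(X_i)$ the group convolution is the wrapping-map image of the Lie algebra convolution $\mu := \mu_{X_1}\ast\cdots\ast\mu_{X_L}$.

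Next I would dispose of the singular case: if $\mu$ is singular on $\mathfrak{g}$, then $\mu_{x_1}\ast\cdots\ast\mu_{x_L}$ should be singular on $G$. Here is where the dimension hypothesis does its work. The support $O_{X_1}+\cdots+O_{X_L}$ of $\mu$ has Lebesgue measure zero in $\mathfrak{g}$ (by the Remark, singularity is equivalent to this); one wants to conclude that $C_{x_1}\cdots C_{x_L} = \exp(O_{X_1}+\cdots+O_{X_L})$ has Haar measure zero in $G$. The equality $\dim C_{x_i} = \dim O_{X_i}$ ensures the exponential map does not collapse the relevant orbits in a way that could inflate dimension, so $\exp$ restricted to a neighbourhood of the support is locally Lipschitz and maps the null set to a null set; I would make this precise by covering the support by finitely many charts on which $\exp$ is smooth. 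Alternatively, and perhaps more cleanly, one argues that absolute continuity of $\mu_{x_1}\ast\cdots\ast\mu_{x_L}$ forces absolute continuity of $\mu$ via the same wrapping-map identity read in reverse, using that $\Psi$ applied to an $L^1$ function is $L^1$ and that the dimension condition makes the fibres of $\exp$ over the support discrete.

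For the absolutely continuous case, suppose $\mu$ has Radon–Nikodym derivative $R$; by Corollary~2.1, $R \in L^1 \cap L^2(\mathfrak{g})$, and $R$ is $G$-invariant and compactly supported, say on a compact set $K$. Writing $R = jf$ with $f = R/j$ — noting $f$ is $G$-invariant and Borel and supported on $K$ — Proposition~\ref{group} with $p = 2$ gives
\begin{equation*}
\big\|\Psi(\mu)\big\|_{L^2(G)} = \big\|\Psi(jf)\big\|_{L^2(G)} \leq C\,\big\|j f\big\|_{L^2(\mathfrak{g})} = C\,\|R\|_{L^2(\mathfrak{g})} < \infty,
\end{equation*}
so $\Psi(\mu) \in L^2(G)$, and hence so is $\mu_{x_1}\ast\cdots\ast\mu_{x_L}$ after dividing by the scalar $\prod_i j(X_i) \neq 0$. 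One should double-check that $f = R/j$ is genuinely supported on a compact set on which $j$ is bounded away from $0$ only where needed — in fact $j$ can vanish, but it vanishes precisely where $\pi$ does among the relevant lattice translates, and since we only ever integrate $j^{2/p}f = j f$ against things, the possible zeros of $j$ cause no trouble: the quantity $\|j^{2/p}f\|_{L^p} = \|jf\|_{L^2} = \|R\|_{L^2}$ is what appears, not $\|f\|$ alone.

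The main obstacle I anticipate is the singular direction — specifically, rigorously transferring a Lebesgue-null support on $\mathfrak{g}$ to a Haar-null support on $G$ using only the hypothesis $\dim C_{x_i} = \dim O_{X_i}$. One must rule out the pathology where $\exp$ dramatically folds the sumset of orbits onto something of positive measure; the dimension-matching hypothesis is exactly what prevents each orbit from "opening up" under $\exp$, but articulating why this controls the \emph{sum} of orbits (rather than each orbit separately) requires care, and I would handle it by the contrapositive: argue that if the group-side convolution were absolutely continuous, pulling back through the local diffeomorphism properties of $\exp$ near the (discrete-fibre) support would force the Lie-algebra-side convolution to be absolutely continuous as well, contradicting the assumed singularity; then invoke the known Lie algebra dichotomy (Corollary~2.1 together with the cited fact that products of orbital measures obey the singular/a.c.\ dichotomy) to finish.
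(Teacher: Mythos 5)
Your handling of the absolutely continuous case is essentially the paper's own argument: the hypothesis $\dim C_{x_{i}}=\dim O_{X_{i}}$ is used exactly to guarantee $j(X_{i})\neq 0$, Lemma \ref{prelim}(2) identifies $\mu _{x_{1}}\ast \cdots \ast \mu _{x_{L}}$ with $\bigl(\prod_{i}j(X_{i})\bigr)^{-1}\Psi (\mu )$ (after the harmless reduction to $X_{i}\in \mathfrak{t}$), and Proposition \ref{group} with $p=2$ applied to $f=R/j$ gives $\Vert \Psi (R)\Vert _{L^{2}(G)}\leq C\Vert R\Vert _{L^{2}(\mathfrak{g})}<\infty $, with $R\in L^{1}\cap L^{2}$ supplied by the Lie algebra dichotomy of Section 2 ($R=P/|\pi |$ on $\mathfrak{t}^{0}$ with $P$ bounded and compactly supported). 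Your remark about the zeros of $j$ is fine: they form a Lebesgue-null set, so $jf=R$ a.e., and the paper performs the same division when writing $P/|\pi |=jP/(\Delta \circ \exp )$.

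The genuine gap is the singular direction, which you yourself flag. Your first argument rests on the identity $C_{x_{1}}\cdots C_{x_{L}}=\exp (O_{X_{1}}+\cdots +O_{X_{L}})$, which is false in general: in a non-abelian group the product of conjugacy classes is not the exponential image of the sum of the adjoint orbits, so nullity of the sumset cannot be transferred this way by a bare covering argument. Moreover, the null-set preservation you invoke has nothing to do with the dimension hypothesis --- $\exp $ is a smooth map between spaces of equal dimension, hence locally Lipschitz on compact sets and null-preserving no matter how the orbits sit; the hypothesis' only role here is to make $\prod_{i}j(X_{i})\neq 0$. Your fallback, that absolute continuity of the group convolution "pulls back" through $\exp $ to absolute continuity of $\mu $ because the fibres are discrete, is precisely the nontrivial equivalence of \cite{CJM} that the paper simply cites; discreteness of fibres does not yield it, since $\Psi $ is far from injective (the sum over $\Gamma $ and the sign changes of $j$ permit cancellation) and absolute continuity of an image measure never formally implies that of the source. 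Two legitimate repairs: cite \cite{CJM}, as the paper does; or notice that your own wrapping identity already gives what is needed --- since $\prod_{i}j(X_{i})\neq 0$, the group convolution is a nonzero multiple of $\Psi (\mu )$, whose support lies in $\exp (O_{X_{1}}+\cdots +O_{X_{L}})$, and if $\mu $ is singular then by the Remark in Section 2 together with the Ragozin/CJM dichotomy on $\mathfrak{g}$ this sumset is Lebesgue-null, so its image under the smooth map $\exp $ is Haar-null and the group convolution is singular. In other words, the inclusion of supports you need is a consequence of Lemma \ref{prelim}(2), not a set-theoretic identity to be asserted.
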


\begin{proof}
It was seen in \cite{CJM} that under the assumption that $x_{i}=\exp X_{i}$
and $\dim C_{x_{i}}=\dim O_{X_{i}},$ $\mu _{x_{1}}\ast \cdot \cdot \cdot
\ast \mu _{x_{L}}$ is absolutely continuous (on $G)$ if and only if $\mu
_{X_{1}}\ast \cdot \cdot \cdot \ast \mu _{X_{L}}\mathfrak{\ }$is absolutely
continuous (on $\mathfrak{g}$). In the previous section, under the latter
assumption we saw that the Radon-Nikodym derivative of $\mu _{X_{1}}\ast
\cdot \cdot \cdot \ast \mu _{X_{L}}$ is a $G$-invariant function which on $%
\mathfrak{t}^{0}$ has the form $P/|\pi |\mathfrak{,}$ where $P$ is a
compactly supported, bounded function.

The hypothesis that $\dim O_{X_{i}}=\dim C_{x_{i}}$ guarantees that $\sin
\alpha (X_{i})=0$ only if $\alpha (X_{i})=0$ for $\alpha \in \Phi $. Hence $%
j(X_{i})\neq 0$ for all $i$. Lemma \ref{prelim}(2) implies that $\mu
_{x_{1}}\ast \cdot \cdot \cdot \ast \mu _{x_{L}}$ has Radon-Nikodym
derivative $C\Psi (P/|\pi |)$ for $C=\prod_{i}j(X_{i})^{-1}$. Hence it
suffices to show that $\Psi (P/|\pi |)=\Psi (jP/(\Delta \circ \exp ))\in
L^{2}(G)$. By the previous theorem 
\begin{equation*}
\left\Vert \Psi \left( \frac{jP}{\Delta \circ \exp }\right) \right\Vert
_{L^{2}(G)}^{2}\leq C\left\Vert \frac{jP}{\Delta \circ \exp }\right\Vert
_{L^{2}(\mathfrak{g)}}^{2}\leq C\int_{\mathfrak{g}}\left\vert \frac{P}{\pi }%
\right\vert ^{2}\leq C\int_{\mathfrak{t}}\left\vert P\right\vert ^{2},
\end{equation*}%
and the latter integral is finite as $P$ is compactly supported and bounded.
\end{proof}

\section{The $L^{2+\protect\varepsilon }$ argument}

In fact, we can prove that if a convolution of orbital measures on $%
\mathfrak{g}$ is absolutely continuous, then its Radon-Nikodym derivative is
actually in $L^{p}(\mathfrak{g})$ for some $p>2$.

\begin{proposition}
If $\mu =\mu _{X_{1}}\ast \cdot \cdot \cdot \ast \mu _{X_{L}}$ is absolutely
continuous with respect to Lebesgue measure on $\mathfrak{g}$, then its
Radon Nikodym derivative belongs to $L^{2+\varepsilon }(\mathfrak{g)}$ for
every $\varepsilon <2$rank$\mathfrak{g}/(\dim \mathfrak{g}-$rank$\mathfrak{g}%
)$.
\end{proposition}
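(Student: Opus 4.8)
The plan is to combine the description of the density obtained in Section~2 with a local integrability estimate for a power of $\pi$. Recall that on the open chamber $\mathfrak{t}^{0}$ the Radon--Nikodym derivative $R$ of $\mu$ equals $P/|\pi|$, with $P$ bounded and supported on a compact set $K$, and that $\dim\mathfrak{g}-\operatorname{rank}\mathfrak{g}=2|\Phi^{+}|$. Writing $q=2+\varepsilon$ and repeating the Weyl integration computation of Section~2 for the $G$-invariant function $|R|^{q}$,
\begin{equation*}
\int_{\mathfrak{g}}|R|^{q}\,dX=c\int_{\mathfrak{t}^{0}}|P(H)|^{q}\,|\pi(H)|^{2-q}\,dH\leq C\int_{\mathfrak{t}\cap K}|\pi(H)|^{-\varepsilon}\,dH .
\end{equation*}
So it suffices to prove that $|\pi|^{-\varepsilon}$ is locally integrable on $\mathfrak{t}$ whenever $\varepsilon<\operatorname{rank}\mathfrak{g}/|\Phi^{+}|$, which is the quantity $2\operatorname{rank}\mathfrak{g}/(\dim\mathfrak{g}-\operatorname{rank}\mathfrak{g})$ appearing in the statement.

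This is a statement about the Coxeter arrangement $\{\ker\alpha:\alpha\in\Phi\}$, which I would prove by localizing. Away from $\bigcup_{\alpha}\ker\alpha$ the integrand is bounded, so it is enough to estimate $\int_{B}|\pi|^{-\varepsilon}$ on a small ball $B$ about an arbitrary point $H_{0}$. After translating by $H_{0}$, only the roots in $\Phi_{0}=\{\alpha\in\Phi:\alpha(H_{0})=0\}$ contribute a singularity, and replacing $H_{0}$ by a Weyl translate we may assume $\Phi_{0}$ is a standard parabolic subsystem. Splitting $\mathfrak{t}=W_{0}\oplus W_{0}^{\perp}$ with $W_{0}=\bigcap_{\alpha\in\Phi_{0}}\ker\alpha$, the $W_{0}$-directions are harmless, and the problem becomes the local integrability near the origin, in the essential space $W_{0}^{\perp}\cong\mathbb{R}^{k}$ ($k=\operatorname{rank}\Phi_{0}$), of $\prod_{\alpha\in\Phi_{0}^{+}}|\alpha|^{-\varepsilon}$.

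The core estimate I would establish by induction on $k$, treating all parabolic subsystems at once. In polar coordinates $v=\rho\omega$ on $\mathbb{R}^{k}$, homogeneity of $\prod_{\alpha\in\Phi_{0}^{+}}\alpha$ gives
\begin{equation*}
\int_{|v|<\eta}\ \prod_{\alpha\in\Phi_{0}^{+}}|\alpha(v)|^{-\varepsilon}\,dv=\left(\int_{0}^{\eta}\rho^{\,k-1-\varepsilon|\Phi_{0}^{+}|}\,d\rho\right)\int_{S^{k-1}}\ \prod_{\alpha\in\Phi_{0}^{+}}|\alpha(\omega)|^{-\varepsilon}\,d\omega ,
\end{equation*}
whose radial factor is finite exactly when $\varepsilon<k/|\Phi_{0}^{+}|$. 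For the angular factor, near a zero $\omega_{0}\in S^{k-1}$ of some $\alpha\in\Phi_{0}^{+}$ the remaining factors stay bounded away from $0$, while in a chart the vanishing factors form a central arrangement in $\mathbb{R}^{k-1}$ attached to the parabolic subsystem $\{\alpha\in\Phi_{0}:\alpha(\omega_{0})=0\}$, which has rank at most $k-1$; the inductive hypothesis applies there. Unwinding the induction, $|\pi|^{-\varepsilon}$ is locally integrable on $\mathfrak{t}$ as soon as $\varepsilon<\operatorname{rank}\Psi/|\Psi^{+}|$ for \emph{every} parabolic subsystem $\Psi\subseteq\Phi$, including $\Psi=\Phi$.

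It remains to check that this minimum is attained at $\Psi=\Phi$, i.e.\ that $|\Psi^{+}|/\operatorname{rank}\Psi\leq|\Phi^{+}|/\operatorname{rank}\Phi$ for every parabolic $\Psi$. Since $|\Psi^{+}|/\operatorname{rank}\Psi$ is half the Coxeter number $h(\Psi)$, and for a reducible system $h$ is a convex combination of the Coxeter numbers of the irreducible factors, one may take $\Psi$ irreducible; being conjugate to a subsystem generated by part of the simple roots of $\Phi$, its highest root $\widetilde{\alpha}_{\Psi}$ is dominated by the highest root $\widetilde{\alpha}$ of $\Phi$, so $h(\Psi)=1+\operatorname{ht}(\widetilde{\alpha}_{\Psi})\leq 1+\operatorname{ht}(\widetilde{\alpha})=h(\Phi)$. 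This yields the bound $\varepsilon<2\operatorname{rank}\mathfrak{g}/(\dim\mathfrak{g}-\operatorname{rank}\mathfrak{g})$. The main obstacle is the angular induction of the third step: one must confirm that passing to the sphere genuinely lowers the rank and that the catalogue of sub-root systems occurring there is exactly the set of parabolic subsystems, so that the induction closes; the Coxeter-number comparison of the last step, though classification-flavoured, is then a short addendum, and the first two steps are routine.
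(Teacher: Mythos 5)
Your argument is correct in substance and reaches the stated threshold, but it takes a genuinely different route from the paper on the key analytic point. Both proofs begin identically: the Section~2 description $R=P/|\pi|$ on $\mathfrak{t}^{0}$ together with the Weyl integration formula reduces everything to the finiteness of $\int_{\mathfrak{t}\cap K}|\pi|^{-\varepsilon}$ (the paper writes the denominator as $|\Delta\circ\exp|^{\varepsilon}$, using $|\pi|\geq|\Delta\circ\exp|$). At that point the paper simply unfolds the integral over a fundamental domain for $\Gamma$ (only finitely many translates meet $K$) and quotes the Stanton--Tomas theorem that $\int_{T}|\Delta|^{-\varepsilon}<\infty$ for $\varepsilon<2\,\mathrm{rank}\,\mathfrak{g}/(\dim\mathfrak{g}-\mathrm{rank}\,\mathfrak{g})$. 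You instead prove the needed local integrability of $|\pi|^{-\varepsilon}$ on $\mathfrak{t}$ from scratch: localization at a point reduces to the subsystem of roots vanishing there, polar coordinates and homogeneity give the radial threshold $\mathrm{rank}\,\Psi/|\Psi^{+}|$, an induction on rank handles the angular integral, and the Coxeter-number monotonicity $h(\Psi)\leq h(\Phi)$ (via $h=1+\mathrm{ht}(\widetilde{\alpha})$, so in fact classification-free despite your remark) shows the binding constraint is $\Psi=\Phi$, i.e.\ $\varepsilon<\mathrm{rank}\,\mathfrak{g}/|\Phi^{+}|$, which is exactly the stated bound since $\dim\mathfrak{g}-\mathrm{rank}\,\mathfrak{g}=2|\Phi^{+}|$. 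The two points you flag are indeed the ones to check, and both are standard: the subsystems arising in the induction are all of the form $\Phi\cap V$ for a subspace $V\subseteq\mathfrak{t}$ (roots vanishing on the span of $H_{0},\omega_{0},\dots$), hence Weyl-conjugate to standard parabolics, and the rank strictly drops on the sphere because the vanishing roots are orthogonal to $\omega_{0}\neq 0$; so the induction closes. What your route buys is a self-contained Lie-algebra proof that never leaves $\mathfrak{t}$ (no $j$, $\Delta$, or $\Gamma$-unfolding) and in effect reproves the Lie-algebra analogue of the Stanton--Tomas estimate; what the paper's route buys is brevity, by transferring to the torus and citing that estimate.
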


\begin{proof}
This will follow from a result of Stanton and Tomas \cite{ST} that states $%
\int_{T}\left\vert \Delta \right\vert ^{-\varepsilon }$ is finite for such $%
\varepsilon $.

From Section 2 we know that the Radon-Nikodym derivative of $\mu $ is equal
to $P/|\pi |$ on $\mathfrak{t}^{0}$, for $P$ bounded and compactly
supported. Let $t_{\Gamma }$ be a fundamental domain for $\Gamma $ in $%
\mathfrak{t}$ that is precompact and $K$ be the compact support of $P.$
Since $\left\vert \pi \right\vert \geq \left\vert \Delta \circ \exp
\right\vert $ we have 
\begin{eqnarray*}
\int_{\mathfrak{g}}\left\vert P/\pi \right\vert ^{2+\varepsilon } &=&\int_{%
\mathfrak{t}^{0}}\left\vert P\right\vert ^{2+\varepsilon }\left\vert \pi
\right\vert ^{-\varepsilon }\leq \int_{\mathfrak{t}}\frac{\left\vert
P(H)\right\vert ^{2+\varepsilon }}{\left\vert \Delta (\exp H)\right\vert
^{\varepsilon }}dH=\sum_{\gamma \in \Gamma }\int_{\mathfrak{t}_{\Gamma }}%
\frac{\left\vert P(H+\gamma )\right\vert ^{2+\varepsilon }}{\left\vert
\Delta (\exp H)\right\vert ^{\varepsilon }}dH \\
&=&\sum_{\gamma \in \Gamma }\int_{K\cap (\mathfrak{t}_{\Gamma }-\gamma )}%
\frac{\left\vert P(H)\right\vert ^{2+\varepsilon }}{\left\vert \Delta (\exp
H)\right\vert ^{\varepsilon }}dH.
\end{eqnarray*}

As $\Gamma $ is discrete there can only be finitely many $\gamma \in \Gamma $
such that $K\cap (\mathfrak{t}_{\Gamma }-\gamma )$ is not empty. Thus
similar reasoning to the proof of Prop. \ref{group}, coupled with the
Stanton-Tomas result, shows that the sum above is finite and hence $\mu \in
L^{2+\varepsilon }(\mathfrak{g)}$.
\end{proof}

\end{document}